\def\gp#1{\langle#1\rangle}
\def\gp#1{\langle #1 \rangle}
\newtheorem*{theorem}{Theorem}
\newtheorem*{corollary}{Corollary}
\title[Locally nilpotent unitary unit group]{Modular group algebras whose group of  unitary units \\ is locally nilpotent}
\author{V.~Bovdi}
\keywords {unitary group, group algebra, locally nilpotent group, Engel group}
\thanks{
Supported by the UAEU  UPAR  grant:  G00002160}
\subjclass{20C05, 16S34,   20F45, 20F19}
\address{Department of Math. Sciences\\
UAE University\\
Al-Ain\\
United Arab Emirates}
\email{vbovdi@gmail.com}
\begin{document}
\maketitle

\begin{abstract}
We characterize those modular group algebras $FG$ whose  group of unitary units is locally nilpotent under the classical involution  of $FG$.
\end{abstract}
%\large

Let $V_*(KG)$ be the unitary subgroup of the group $V(KG)$  of normalized units of the  group ring $KG$ of a group $G$ over the ring $K$,  under the classical involution $*$ of $KG$. The group  $V_*(KG)$  has a complicated  structure, has been actively studied  and it has  several applications (for instance, see \cite{involut_book, Novikov,  Serre}). For an overview  we  recommend the survey paper \cite{Bovdi_survey}.

\smallskip

Our main result is the following.

\begin{theorem}\label{T:1}
Let $V(FG)$ be the group of normalized units of the  modular group algebra $FG$ of a group $G$ over the field $F$  of positive characteristic $p$. Let $V_*$ be the unitary subgroup of the group $V(FG)$ under the classical involution $*$ of $FG$. The  following conditions are equivalent:
\begin{itemize}
\item[(i)] $V(FG)$   is locally nilpotent;
\item[(ii)] $V_*(FG)$   is locally nilpotent;
\item[(iii)] $G$ is locally nilpotent and  the commutator subgroup $G'$ of $G$ is a $p$-group.
\end{itemize}
\end{theorem}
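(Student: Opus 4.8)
The plan is to prove the cycle of implications (iii) $\Rightarrow$ (i) $\Rightarrow$ (ii) $\Rightarrow$ (iii). The implication (i) $\Rightarrow$ (ii) is immediate: $V_*(FG)$ is a subgroup of $V(FG)$, and every subgroup of a locally nilpotent group is locally nilpotent. For (iii) $\Rightarrow$ (i) I would invoke the classical Khripta--Bovdi criterion for the full unit group. One reduces to a finitely generated subgroup $H \le G$, which is nilpotent (as $G$ is locally nilpotent) and whose derived subgroup $H' \le G'$ is a finitely generated $p$-group, hence a finite $p$-group. The essential structural fact is then that the kernel $I$ of the natural map $FH \to F[H/H']$ equals $FH\cdot\omega(FH')$ and is a nilpotent ideal, because $H'$ is a finite $p$-group and $\operatorname{char} F = p$; this exhibits $V(FH)$ as an extension of the abelian group $V(F[H/H'])$ by the nilpotent normal subgroup $1+I$, and the criterion yields that every finitely generated subgroup of $V(FG)$ is nilpotent.

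The heart of the matter is (ii) $\Rightarrow$ (iii), where one must recover the structure of $G$ from the a priori far smaller group $V_*(FG)$. The first key observation is that $G \subseteq V_*(FG)$: for $g\in G$ the classical involution gives $g^* = g^{-1}$, so $g\,g^* = 1$ and $g$ is a normalized unitary unit. Hence, if $V_*(FG)$ is locally nilpotent, so is its subgroup $G$, which establishes the first half of (iii) at once.

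It remains to prove that $G'$ is a $p$-group, and I would argue by contradiction. If not, then, since $G$ is locally nilpotent, there is a prime $q\neq p$ and a finitely generated (hence nilpotent) subgroup $H\le G$ whose derived subgroup $H'$ contains an element of order $q$. Because $H$ is nilpotent, the nontrivial characteristic subgroup $T_q(H')$ is normal in $H$ and therefore meets $Z(H)$, so one may fix a central element $x\in H'$ of order $q$. As $q\neq p$, the element $e = q^{-1}\widehat{x}$, where $\widehat{x} = 1+x+\cdots+x^{q-1}$, is a central idempotent, and since $\widehat{x}^{\,*}=\widehat{x}$ it is symmetric, $e^*=e$. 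Decomposing $FH$ along the characters of $\langle x\rangle$ produces a $*$-invariant homomorphic image $A$ on which $x$ acts as a nontrivial scalar; because $x\in H'$, the image of $H$ in $A$ is non-abelian. The plan is then to manufacture inside $A$ explicit unitary units — for instance elements $1+a$ with $a$ skew, $a^*=-a$, and $a^2=0$, which are automatically unitary since $(1+a)(1+a^*)=1-a^2=1$ — built from $e$ and a group element $g$ that does not commute with $e$, and to show that these together with $G$ generate a non-nilpotent subgroup of $V_*(FG)$, contradicting (ii).

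This last construction is the \textbf{main obstacle}: one must produce genuinely unitary units (not merely units), realize them inside $V_*(FG)$ itself after the reduction to the finite section $A$, and separately handle the exceptional prime $p=2$, where symmetric and skew elements coincide and the relation $(1+a)(1+a^*)=1$ must be arranged by hand. I expect the cleanest route to non-nilpotency to be the identification inside $A$ of a matrix component $M_n(D)$ with $n\ge 2$ on which $*$ induces a nondegenerate involution, whose unitary group is never nilpotent; the technical burden is to ensure that such a component survives the radical arising from the $p$-part of $H$ and that the witnessing unitary units lift faithfully to $V_*(FG)$.
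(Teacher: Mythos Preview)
Your treatment of (iii)$\Rightarrow$(i)$\Rightarrow$(ii) coincides with the paper's: the first implication is exactly the cited result that $V(FG)$ is locally nilpotent whenever $G$ is locally nilpotent with $G'$ a $p$-group, and the second is the trivial passage to a subgroup.

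For (ii)$\Rightarrow$(iii) the paper takes the opposite route. Rather than a central $q$-element with $q\ne p$ and an idempotent decomposition, it fixes a central element $c$ of order $p$ in the finitely generated nilpotent subgroup $H$ under consideration and uses the central \emph{square-zero} element $\widehat c=\sum_{i=0}^{p-1}c^i$. For $g,h$ with $(g,h)\ne 1$ the element $w=1+(g-g^{-1})\widehat c$ is unitary --- indeed $(g-g^{-1})\widehat c$ is skew and squares to zero, exactly your ``$1+a$'' template, but manufactured from the $p$-part rather than the $p'$-part. A direct expansion of the iterated commutator $(w,h,\ldots,h)$ of length $p^m$, using only that $\binom{p^m}{i}\equiv 0\pmod p$ for $0<i<p^m$, forces $h^{p^{m+1}}\in C_G(g)$; short separate computations dispose of the degenerate cases $g^2\in\gp{c}$ for $p=2$ and for $p>2$. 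It follows that every commutator $(g,h)$ is a $p$-element, whence $G'$ is a $p$-group.

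Your plan has two genuine gaps. First, the reduction ``$G'$ not a $p$-group $\Rightarrow$ some finitely generated $H\le G$ has $q$-torsion in $H'$ for a prime $q\ne p$'' is unjustified: $G'$ may fail to be a $p$-group solely because it contains elements of infinite order, with no $q$-torsion present at all. The paper avoids this by proving directly that each commutator has finite $p$-power order. Second --- and you acknowledge it --- the actual construction of a non-nilpotent unitary subgroup in the quotient $A$ is left as the ``main obstacle'' and is not carried out. Over an arbitrary field $F$, locating a Wedderburn component $M_n(D)$ with $n\ge 2$, determining the type of the induced involution, verifying that its unitary group is non-nilpotent, and then lifting explicit witnesses back to $V_*(FG)$ is a substantial programme with real pitfalls; none of this machinery is needed in the paper's short Engel computation. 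A minor slip: you refer to ``a group element $g$ that does not commute with $e$'', but your $e$ is central by construction.
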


\begin{proof} (ii) $\Rightarrow$ (iii).
Since  $V_*$ is  a locally nilpotent group,   $G$  is also locally nilpotent.
Consider an f.g.  subgroup $S=\gp{f_1,\ldots, f_s\in V_*}\leq V_*$.
Clearly $H=\gp{supp(f_1),\ldots, supp(f_s)}$
is an f.g. nilpotent  subgroup of $G$ and $S\leq V_*(FH)< V_*(FG)$.
Hence  we may restrict our attention to the subgroup $V_*(FH)$, where  $H$ is an f.g. nilpotent subgroup  of $G$ containing a $p$-element. The set  $\mathfrak{t}(H)$ of torsion elements of $H$ is a finite group (see   \cite{Hall_2}, 7.7, p. 29)  and  $\mathfrak{t}(H)=\times_{q} \mathrm{S}_q$ is a direct product  of its Sylow  $q$-subgroups  $\mathrm{S}_q$ (see  \cite{Hall_book}, 10.3.4, p.176).   Clearly   there exists  $c\in \zeta(\mathrm{S}_p)$ of order $p$ such that  $\widehat{c}=\sum_{i=0}^{p-1}c^i$   is a central square-zero  element of $FG$, where $\zeta(\mathrm{S}_p)$ is the center of $\mathrm{S}_p$. Let us  fix  $c\in \zeta(\mathrm{S}_p)$.

Now we prove that for any $g,h\in G$ with  $(g,h)\not=1$ there always exists $s\in \mathbb{N}$ such that $h^{p^s}\in  C_G(g)$.  We consider  the following three cases:

\smallskip

\noindent
\underline{Case 1. Let $g^2\not\in\gp{c\mid c^p=1}\subseteq \zeta(\mathrm{S}_p)$.} Since  a locally nilpotent group  is always   Engel (see \cite{Traustason}) and the element  $w=1+(g-g^{-1})\widehat{c}$ is a unitary unit,   $L=\gp{g,h, w}$ is an f.g. nilpotent subgroup of $V_*$. Hence  $L$ is Engel and    the nilpotency class $cl(L)$   of $L$ is at most $p^m$ for some $m\in \mathbb{N}$. Put   $q=p^m$.  Since $\binom {q}{i} \equiv 0 \pmod{p}$ for  $0 <i< q$ and $L$ is   Engel, we get that
%\begin{equation}\label{E:1}
\[
\begin{split}
1=\big(w,h, q\big)&= 1+\widehat{c}\sum_{i=0}^q (-1)^i\textstyle\binom {q}{i} \Big(g^{h^{q-i}}-g^{-h^{q-i}}\Big)\\
&=1+\widehat{c}\Big((g^{h^{q}}-g)-(g^{-h^{q}}-g^{-1})\Big),
\end{split}
\]
%\end{equation}
so
\[
\begin{split}
\widehat{c}\big((g^{h^{q}}-g)-(g^{-h^{q}}&-g^{-1})\big)=\\
&=\widehat{c}\big(g^2(g,{h^{q}})-\underline{g^2}-(g^{-1},g^{-h^{q}})+1\big)=0,
\end{split}
\]
which leads to the following  two cases:\quad either   $g^2=g^2(g,{h^{q}}) c^j$\; or\;  $g^2=(g^{-1},g^{-h^{q}}) c^j$.

\smallskip

If  $g^2=g^2(g,{h^{q}}) c^j$ for some  $0\leq j<p$, then  $(g,h^{q})=c^i$  and  $g^{h^{q}}=c^ig$ for some  $0\leq i<p$. Hence $\big(h^{-q}\big)^p g\big(h^{q}\big)^p=(c^i)^pg=g$ and $h^{p^{m+1}}\in C_G(g)$. If   $g^2=(g^{-1},g^{-h^{q}}) c^j$  for some  $0\leq j<p$, then similarly to the previous case  we get  that $h^{p^{m+1}}\in  C_G(g)$.

%\bigskip

\smallskip

\noindent
\underline{Case 2. Let  $g^2\in \gp{c\mid c^2=1}\subseteq \zeta(\mathrm{S}_2)$ and $p=2$.} Clearly   $|g|\in\{2,4\}$  and  $w=1+g\widehat{c}\in V_*(FG)$, so $M=\gp{g,h, w}$ is an Engel subgroup  of $V_*(FG)$ and
there exists $m\in \mathbb{N}$ such that  $cl(M)\leq 2^m$. Put   $k=2^m$.  Since $\binom {k}{i} \equiv 0 \pmod{2}$ for  $0 <i< k$ and $M$ is   Engel, we get that
\[
1=\big(w,h, k\big)= 1+\widehat{c}\sum_{i=0}^k (-1)^i\textstyle\binom {k}{i} g^{h^{q-i}}=1+\widehat{c}\big(g^{h^{q}}-g\big).
\]
Hence    $\widehat{c}(g^{h^{k}}-g)=0$ and $(g,{h^{k}})\in\gp{c}$, so either  $(g,{h^{k}})=1$ or     $g^{h^{k}}=cg$.
In the second case we have\quad  $\big(h^{-k}\big)^2 g\big(h^{k}\big)^2=c^2g=g$\quad  and \quad   $h^{2^{m+1}}\in C_G(g)$.

\smallskip

\noindent
\underline{Case 3. Let  $g^2\in \gp{c}\subseteq \zeta(\mathrm{S}_p)$ and $p>2$.} Evidently   $|g|=2p$ and $g=ax$ in which  $|a|=2$ and $x\in \gp{c}$. Moreover $g^{p^t}=a$ for any $t\in \mathbb{N}$ and $(g,h)=( g^{p^t},h)=(a,h)\not=1$. Now exchanging $g\leftrightarrows h$ and repeating the calculation above (i.e. the cases when either $h^2\not\in\gp{c}$ or $h^2\in\gp{c}$ and $p>2$) we see that the   only possible case left is  $|h|=2p$ and $h^2\in\gp{c}$.  It follows that $h=by$ in which $|b|=2$ and $y\in \gp{c}$. Thus  $(g,h)=(a,b)\not=1$, $|ab|>2$ and $a(ab)a=(ab)^{-1}$, so
\[
\gp{1+\big((ab)-(ab)^{-1}\big)\widehat{c},\;  a}\cong C_p\rtimes C_{2}, \quad (p>2)
\]
is a non-nilpotent unitary subgroup, a contradiction.

Consequently   for any $g,h\in G$ with  $(g,h)\not=1$ there always exists $s\in \mathbb{N}$ such that
$h^{p^s}\in  C_G(g)$, so   $(g,h)$ is  a  $p$-element  (see \cite{Robinson}, 10.1.4,  p.287) and  $G'$ is a $p$-group.

If  $G$ is locally nilpotent with    $G'$ is a $p$-group, then $V(FG)$ is locally nilpotent (see \cite{Bovdi_Locally_nilpotent}, Theorem 3), so $V_*\leq V(FG)$ is  locally nilpotent too.  This proves  (iii) $\Rightarrow$ (i) $\Rightarrow$ (ii).
\end{proof}

\smallskip

The following  trivial  consequence of our  theorem  gives  a generalization of a  result of G.T.~Lee, S.~Sehgal and E.~Spinelli  obtained in \cite{Lee_Sehgal_Spinelli} for the case when the  characteristic of the field $F$ is different from $2$.

\begin{corollary}\label{C:1}
Let $FG$ be  the  modular group algebra  of a group $G$ over the field $F$  of positive characteristic $p$.  Let $V_*$ be the unitary subgroup of the group $V(FG)$ under the classical involution $*$ of $FG$. The  following conditions are equivalent:
\begin{itemize}
\item[(i)] $V(FG)$   is  nilpotent;
\item[(ii)] $V_*(FG)$   is  nilpotent;
\item[(iii)] $G$ is  nilpotent and  the commutator subgroup $G'$ of $G$ is a finite $p$-group.
\end{itemize}
\end{corollary}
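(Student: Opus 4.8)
The plan is to derive the Corollary directly from the main Theorem, exploiting that nilpotency is a strictly stronger condition than local nilpotency. Since the three conditions of the Theorem are already known to be equivalent, the work here is only to upgrade each ``locally nilpotent'' statement to ``nilpotent'' and to upgrade ``$G'$ is a $p$-group'' to ``$G'$ is a \emph{finite} $p$-group''. Let me think about how these upgrades interact.

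Let me reason about the implications I would prove.

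First, (iii) $\Rightarrow$ (i). Suppose $G$ is nilpotent with $G'$ a finite $p$-group. Then in particular $G$ is locally nilpotent and $G'$ is a $p$-group, so by the Theorem $V(FG)$ is locally nilpotent. To promote this to genuine nilpotency I would invoke the known classification of when a modular group algebra has nilpotent unit group — this is exactly the content of the result for which the finiteness of $G'$ is the correct hypothesis. The standard reference (Khripta's theorem, or the Bovdi reference already cited) states that $V(FG)$ is nilpotent precisely when $G$ is nilpotent and $G'$ is a finite $p$-group. So this direction follows by citing that characterization rather than by an independent argument.

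Next, (i) $\Rightarrow$ (ii) is immediate: a subgroup of a nilpotent group is nilpotent, and $V_*(FG) \leq V(FG)$, so if $V(FG)$ is nilpotent then so is its unitary subgroup $V_*(FG)$.

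Finally, (ii) $\Rightarrow$ (iii) is the one direction requiring genuine care, and I expect it to be the main obstacle. Assume $V_*(FG)$ is nilpotent. Then a fortiori it is locally nilpotent, so the Theorem gives that $G$ is locally nilpotent and $G'$ is a $p$-group; it remains to strengthen ``$G$ locally nilpotent'' to ``$G$ nilpotent'' and to show $G'$ is \emph{finite}. For the first, since $V_*$ nilpotent forces $G$ to sit inside (a copy of) a nilpotent group via the group elements $G \hookrightarrow V(FG)$ combined with the involution, one shows $G$ itself embeds in the nilpotent group and hence has bounded nilpotency class, yielding nilpotency of $G$. For the finiteness of $G'$, the key point is that nilpotency (unlike local nilpotency) prohibits the commutator subgroup from being an infinite $p$-group: I would use the uniform bound on nilpotency class of $V_*$ to bound the exponent and then the order of $G'$, reducing to the structure of the group basis. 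The main difficulty is ensuring that the global nilpotency class bound on $V_*$ transfers correctly to a finiteness statement about $G'$, since local nilpotency alone was consistent with an infinite $G'$.

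In writing the proof I would keep it short, explicitly noting that all the hard analysis has been done in the Theorem, and that the Corollary is obtained by combining the Theorem with the known nilpotency characterization of $V(FG)$, replacing each local-nilpotency hypothesis by its nilpotency counterpart.
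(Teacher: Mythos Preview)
The paper supplies no proof of the Corollary at all; it simply declares it a ``trivial consequence'' of the Theorem, implicitly combined with Khripta's classical characterisation $(\mathrm{i})\Leftrightarrow(\mathrm{iii})$ of when $V(FG)$ is nilpotent. Your plan --- deduce $(\mathrm{iii})\Rightarrow(\mathrm{i})$ from Khripta, $(\mathrm{i})\Rightarrow(\mathrm{ii})$ trivially, and $(\mathrm{ii})\Rightarrow(\mathrm{iii})$ by passing through the Theorem --- is therefore exactly what the paper has in mind, and your observation that $G\subseteq V_*(FG)$ (since $g^{*}=g^{-1}$ for $g\in G$) is the right way to upgrade ``$G$ locally nilpotent'' to ``$G$ nilpotent''.

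There is, however, a genuine soft spot in your sketch of $(\mathrm{ii})\Rightarrow(\mathrm{iii})$, precisely at the step you yourself flag as the main difficulty: the finiteness of $G'$. Your proposed mechanism --- use the global bound on the nilpotency class of $V_*$ to bound the exponent of $G'$, and from that its order --- does not work as stated. Bounded exponent together with bounded nilpotency class does \emph{not} force a $p$-group to be finite: for instance, a restricted direct product of infinitely many copies of a fixed non-abelian group of order $p^{3}$ is nilpotent of class~$2$ and has $G'$ elementary abelian of exponent~$p$, yet $G'$ is infinite. So an exponent bound alone cannot close the argument. What is actually needed is to show that when $G$ is nilpotent and $G'$ is an \emph{infinite} $p$-group, $V_*(FG)$ contains finite unitary subgroups of arbitrarily large nilpotency class (equivalently, that the class of $V_*(FH)$ is unbounded as $H$ ranges over finite subgroups of $G$ with $|H'|\to\infty$). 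The paper does not spell this out either, so you are not diverging from its approach; but your stated justification for finiteness has a gap that should be filled before the argument is complete.
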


\newpage

\end{document}